\documentclass[a4paper, reqno, 12pt]{amsart}
\usepackage{hyperref}
\usepackage[usenames,dvipsnames]{color}
\usepackage{amsthm,amsfonts,amssymb,amsmath,amsxtra, array}
\usepackage[all]{xy}
\usepackage{xr-hyper}
\usepackage{verbatim}
\usepackage{tikz-cd, epic}
\usepackage[margin=1.25in]{geometry}
\usepackage{mathrsfs}
\usepackage{bbm}
\usepackage{float}
\restylefloat{table}
\usepackage{MnSymbol}

\RequirePackage{xspace}
\RequirePackage{etoolbox}
\RequirePackage{varwidth}
\RequirePackage[shortlabels]{enumitem}
\RequirePackage{mathtools}
\RequirePackage{longtable}

\setcounter{tocdepth}{1}

\def\<{\langle}
\def\>{\rangle}

\newcommand{{\BG}}{\ensuremath{\mathbb {G}}\xspace}

\newcommand{{\BK}}{\ensuremath{\mathbb {K}}\xspace}

\newtheorem{theorem}{Theorem}

\theoremstyle{definition}

\newtheorem{remark}[theorem]{Remark}

\newtheorem*{corollary to thm}{Corollary to Theorem \ref{dhkmthm}}



\setitemize[0]{leftmargin=*,itemsep=\the\smallskipamount}
\setenumerate[0]{leftmargin=*,itemsep=\the\smallskipamount}

\renewcommand{\to}{%
   \ifbool{@display}{\longrightarrow}{\rightarrow}%
   }
\let\shortmapsto\mapsto
\renewcommand{\mapsto}{%
   \ifbool{@display}{\longmapsto}{\shortmapsto}%
   }
\newlength{\olen}
\newlength{\ulen}
\newlength{\xlen}
\newcommand{\xra}[2][]{%
   \ifbool{@display}%
      {\settowidth{\olen}{$\overset{#2}{\longrightarrow}$}%
       \settowidth{\ulen}{$\underset{#1}{\longrightarrow}$}%
       \settowidth{\xlen}{$\xrightarrow[#1]{#2}$}%
       \ifdimgreater{\olen}{\xlen}%
          {\underset{#1}{\overset{#2}{\longrightarrow}}}%
          {\ifdimgreater{\ulen}{\xlen}%
             {\underset{#1}{\overset{#2}{\longrightarrow}}}
             {\xrightarrow[#1]{#2}}}}%
      {\xrightarrow[#1]{#2}}
   }
\makeatother
\newcommand{\xyra}[2][]{%
   \settowidth{\xlen}{$\xrightarrow[#1]{#2}$}%
   \ifbool{@display}%
      {\settowidth{\olen}{$\overset{#2}{\longrightarrow}$}%
       \settowidth{\ulen}{$\underset{#1}{\longrightarrow}$}%
       \ifdimgreater{\olen}{\xlen}%
          {\mathrel{\xymatrix@M=.12ex@C=3.2ex{\ar[r]^-{#2}_-{#1} &}}}%
          {\ifdimgreater{\ulen}{\xlen}%
             {\mathrel{\xymatrix@M=.12ex@C=3.2ex{\ar[r]^-{#2}_-{#1} &}}}
             {\mathrel{\xymatrix@M=.12ex@C=\the\xlen{\ar[r]^-{#2}_-{#1} &}}}}}%
      {\mathrel{\xymatrix@M=.12ex@C=\the\xlen{\ar[r]^-{#2}_-{#1} &}}}%
   }
\makeatletter
\newcommand{\xla}[2][]{%
   \ifbool{@display}%
      {\settowidth{\olen}{$\overset{#2}{\longleftarrow}$}%
       \settowidth{\ulen}{$\underset{#1}{\longleftarrow}$}%
       \settowidth{\xlen}{$\xleftarrow[#1]{#2}$}%
       \ifdimgreater{\olen}{\xlen}%
          {\underset{#1}{\overset{#2}{\longleftarrow}}}%
          {\ifdimgreater{\ulen}{\xlen}%
             {\underset{#1}{\overset{#2}{\longleftarrow}}}
             {\xleftarrow[#1]{#2}}}}%
      {\xleftarrow[#1]{#2}}
   }
\newcommand{\isoarrow}{%
   \ifbool{@display}{\overset{\sim}{\longrightarrow}}{\xrightarrow\sim}%
   }


\subjclass[2000]{22E50, 11F70, 20C08}

\begin{document}

\title{A note on the admissibility of smooth simple $RG$-modules}
\author{Mihir Sheth}
\address{Department of Mathematics, Indian Institute of Science \\ Bangalore - 560012, India.}
\email{mihirsheth@iisc.ac.in}

\maketitle

\begin{abstract}
	Let $G$ be a $p$-adic reductive group and $R$ be a noetherian Jacobson $\mathbb{Z}[1/p]$-algebra. In this note, we show that every smooth irreducible $R$-linear representation of $G$ is admissible using the finiteness result of Dat, Helm, Kurinczuk and Moss for Hecke algebras over $R$.
\end{abstract}

\vspace{2mm}

Unless mentioned otherwise, all rings are commutative with unity. A $p$-adic reductive group is the group of rational points of a reductive group defined over a non-archimedean local field of residue characteristic $p>0$. Let $R$ be a ring and $G$ be a $p$-adic reductive group. Let $RG$ denote the group algebra. An $RG$-module $\pi$ is called \emph{smooth} if every $v\in\pi$ is fixed by some compact open subgroup in $G$, and \emph{admissible} if for every compact open subgroup $K\subseteq G$, $\pi^{K}$ is a finitely generated $R$-module. 

For a compact open subgroup $K\subseteq G$, let $H_{R}(G,K)$ denote the Hecke algebra of compactly supported $R$-valued $K$-biinvariant functions on $G$ equipped with the convolution product and $Z_{R}(G,K)$ denote its center. The Hecke algera $H_{R}(G,K)$ is an associative $R$-algebra with unity. The following theorem is the joint work \cite[Theorem 1.2]{dhkm24} of Dat, Helm, Kurinczuk and Moss (see also \cite{dhkm22}):

\begin{theorem}\label{dhkmthm}
For any noetherian $\mathbb{Z}[1/p]$-algebra $R$ and any compact open subgroup $K\subseteq G$, the Hecke algebra $H_{R}(G,K)$ is a finitely generated module over $Z_{R}(G,K)$ and $Z_{R}(G,K)$ is a finitely generated $R$-algebra.  
\end{theorem}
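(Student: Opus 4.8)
First I would fix the counting-measure normalization (so that $\mathbf{1}_{K}$ is the unit), under which $H_{\mathbb{Z}}(G,K)$ is free over $\mathbb{Z}$ on the characteristic functions $\mathbf{1}_{KaK}$ of the double cosets $K\backslash G/K$, with convolution structure constants $m_{a,b}^{c}=\#\{x\in G/K:x\in KaK,\ x^{-1}c\in KbK\}\in\mathbb{Z}_{\ge 0}$. Consequently $H_{R}(G,K)\cong R\otimes_{\mathbb{Z}}H_{\mathbb{Z}}(G,K)$ for every ring $R$, so for a $\mathbb{Z}[1/p]$-algebra $R$ we get $H_{R}(G,K)\cong R\otimes_{\mathbb{Z}[1/p]}H_{\mathbb{Z}[1/p]}(G,K)$. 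This reduces everything to $R=\mathbb{Z}[1/p]$: writing $A=H_{\mathbb{Z}[1/p]}(G,K)$ and assuming $A$ is module-finite over a finitely generated central subalgebra $Z$, the central elements of $A$ remain central after $\otimes_{\mathbb{Z}[1/p]}R$, so $Z\otimes_{\mathbb{Z}[1/p]}R$ is a finitely generated, hence Noetherian, $R$-algebra over which $H_{R}(G,K)=A\otimes R$ is finite. Then $Z_{R}(G,K)$, being a $(Z\otimes R)$-submodule of the finite module $H_{R}(G,K)$, is itself finite over $Z\otimes R$, whence both assertions of the theorem follow over $R$ from the case $R=\mathbb{Z}[1/p]$.

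\textbf{Step 2: why one cannot just change level, and the replacement.} I would record that the naive comparison of levels via idempotents only works inside pro-$p$ groups: the corner-algebra identity $H_{R}(G,K)\cong e_{K}H_{R}(G,K')e_{K}$ needs $[K:K']$ invertible, i.e.\ a power of $p$ over $\mathbb{Z}[1/p]$, which forces $K$ to be pro-$p$. A general $K$ has finite reductive quotient of order divisible by primes $\neq p$, so one cannot reduce to a single convenient level (a pro-$p$ Iwahori, say). The uniform substitute is the \emph{integral Bernstein center} $\mathfrak{Z}_{\mathbb{Z}[1/p]}(G)=\mathrm{End}\!\left(\mathrm{id}_{\mathrm{Rep}_{\mathbb{Z}[1/p]}(G)}\right)$, which acts functorially on $K$-invariants and so maps to $Z_{\mathbb{Z}[1/p]}(G,K)$. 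The strategy is to bound $H_{\mathbb{Z}[1/p]}(G,K)$ and $Z_{\mathbb{Z}[1/p]}(G,K)$ in terms of the part of $\mathfrak{Z}_{\mathbb{Z}[1/p]}(G)$ ``supported in level $\le K$'', reducing the theorem to: (i) $H_{\mathbb{Z}[1/p]}(G,K)$ is module-finite over the image of $\mathfrak{Z}_{\mathbb{Z}[1/p]}(G)$, and (ii) that image is a finitely generated $\mathbb{Z}[1/p]$-algebra. A Noetherianity input for the category $\mathrm{Rep}_{\mathbb{Z}[1/p]}(G)$ underlies turning submodule arguments like the one in Step 1 into genuine finiteness.

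\textbf{Step 3: d\'evissage by cuspidal support.} The core would be an induction on the semisimple rank of $G$, organized by the cuspidal support map. Using second adjointness (parabolic induction $i_{L}^{G}$ being both a left and a right adjoint of the two Jacquet functors, now available with $\mathbb{Z}[1/p]$-coefficients) and the fact that every smooth irreducible is a subquotient of some $i_{L}^{G}\sigma$ with $\sigma$ supercuspidal on a Levi $L$, I would split $\mathfrak{Z}_{\mathbb{Z}[1/p]}(G)$ into the contribution of proper Levis and the genuinely cuspidal contribution. The contribution of each proper $L$ is controlled by the finiteness for $L$ (the inductive hypothesis) transported along $i_{L}^{G}$, provided parabolic induction and restriction preserve the relevant module-finiteness; the base of the induction is $L$ a torus (or anisotropic modulo center), where the Hecke algebra is essentially commutative. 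As a concrete anchor for the principal/depth-zero part one may cite Vign\'eras' integral Bernstein presentation of the pro-$p$ Iwahori Hecke algebra, which is finite over a commutative subalgebra $R[\Lambda]$ with center containing $R[\Lambda]^{W}$, so that classical invariant theory ($R[\Lambda]$ finite over the finitely generated ring $R[\Lambda]^{W}$) already yields the statement there and models the general estimate.

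\textbf{Step 4: the main obstacle.} The hard part is the purely cuspidal contribution. Over a characteristic-zero field this is Bernstein's theorem: a supercuspidal block has center the functions on a quotient of the torus of unramified twists, hence a finitely generated algebra. Integrally, however, there is no clean block decomposition and no general theory of types, and distinct supercuspidal supports can merge after reduction modulo the various $\ell\neq p$. The crux is therefore to prove, uniformly over $\mathbb{Z}[1/p]$, that only finitely many inertial supercuspidal supports have level $\le K$ and that the unramified-twist action on each is through a finite-type torus, so that the corresponding piece of $\mathfrak{Z}_{\mathbb{Z}[1/p]}(G)$ is a finitely generated algebra over which $H_{\mathbb{Z}[1/p]}(G,K)$ is finite. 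Gluing the $\ell$-adic pictures into a single finite statement over $\mathbb{Z}[1/p]$, without recourse to types, is where I expect essentially all the difficulty to concentrate, and it is precisely the input supplied by Dat, Helm, Kurinczuk and Moss.
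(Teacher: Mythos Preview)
The paper does not prove this theorem at all: Theorem~\ref{dhkmthm} is simply quoted from \cite[Theorem 1.2]{dhkm24} (see also \cite{dhkm22}) and used as a black box. There is therefore no ``paper's own proof'' to compare your proposal against; the entire content of the note is the \emph{Corollary to Theorem~\ref{dhkmthm}} and the short deduction of Theorem~\ref{mainthm} from it.

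As for your sketch on its own merits: Steps~1--3 are a reasonable outline of the reduction strategy one finds in the DHKM papers (base change to $\mathbb{Z}[1/p]$, working through the integral Bernstein center, d\'evissage along parabolic induction with the pro-$p$ Iwahori case as an anchor). But your Step~4 is not a proof step; you explicitly say that the cuspidal contribution ``is precisely the input supplied by Dat, Helm, Kurinczuk and Moss''. So your proposal is a roadmap that terminates by invoking the very theorem it is meant to establish. That is fine as an expository summary of where the difficulty lies, but it is not an independent proof, and in particular it does not go beyond what the present paper does, namely cite \cite{dhkm22,dhkm24}.
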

 
As an application of Theorem \ref{dhkmthm}, we prove the following result:

\begin{theorem}\label{mainthm}
If $R$ is a noetherian Jacobson $\mathbb{Z}[1/p]$-algebra, then any smooth simple $RG$-module $\pi$ is admissible.
\end{theorem}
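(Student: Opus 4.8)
The plan is to reduce the statement, via the usual idempotent yoga, to a question about a simple module over a Hecke algebra $H_R(G,K)$, and then to combine Theorem~\ref{dhkmthm} with the Jacobson hypothesis on $R$. So I would fix a smooth simple $RG$-module $\pi$ and an arbitrary compact open subgroup $K\subseteq G$; the goal is to show $\pi^K$ is a finitely generated $R$-module. It suffices to do this after shrinking $K$: if $K_1\subseteq K$ is any compact open \emph{pro-$p$} subgroup (take $K$ intersected with a compact open pro-$p$ subgroup of $G$), then $\pi^K\subseteq\pi^{K_1}$, and since $R$ is noetherian, finite generation of $\pi^{K_1}$ implies that of $\pi^K$. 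So I may assume $K$ itself is pro-$p$. Fixing a Haar measure $\mu$ on $G$ with $\mu(U_0)=1$ for some compact open pro-$p$ subgroup $U_0$, every compact open pro-$p$ subgroup has volume a power of $p$, hence a unit in the $\mathbb{Z}[1/p]$-algebra $R$; in particular $\vol(K)\in R^\times$, and I may form the idempotent $e_K=\vol(K)^{-1}\one_K$ in the convolution algebra $C_c^\infty(G,R)$.

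Next I would invoke the standard dictionary: $\pi$ is a non-degenerate $C_c^\infty(G,R)$-module with $C_c^\infty(G,R)\cdot\pi=\pi$, the operator $e_K$ acts as the projector onto $\pi^K$, and $e_K\ast C_c^\infty(G,R)\ast e_K=H_R(G,K)$, so $\pi^K=e_K\pi$ becomes a module over $H:=H_R(G,K)$. The key observation is that simplicity of $\pi$ forces $\pi^K$ to be a simple $H$-module whenever it is nonzero: given a nonzero $H$-submodule $M\subseteq\pi^K$, the subspace $C_c^\infty(G,R)\cdot M$ is a nonzero $RG$-submodule of $\pi$, hence all of $\pi$; applying $e_K$ and using $e_K M=M$ then yields $\pi^K=e_K\bigl(C_c^\infty(G,R)\cdot M\bigr)=\bigl(e_K\ast C_c^\infty(G,R)\ast e_K\bigr)\cdot M=H\cdot M=M$.

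Finally I would feed this into Theorem~\ref{dhkmthm}. We may assume $\pi^K\neq0$, so $\pi^K$ is a simple $H$-module. Put $Z:=Z_R(G,K)$. By Theorem~\ref{dhkmthm}, $Z$ is a finitely generated $R$-algebra — hence noetherian — and $H$ is a finitely generated $Z$-module; since a simple $H$-module is cyclic, it follows that $\pi^K$ is a finitely generated $Z$-module. I would then argue that $\mathfrak m:=\mathrm{Ann}_Z(\pi^K)$ is a \emph{maximal} ideal of $Z$: if $\mathfrak n\supseteq\mathfrak m$ is a maximal ideal of $Z$, then $\mathfrak n$ lies in the support of the finitely generated module $\pi^K$, so $(\pi^K)_{\mathfrak n}\neq0$ and Nakayama's lemma gives $\mathfrak n\pi^K\neq\pi^K$; but $\mathfrak n\subseteq Z$ is central in $H$, so $\mathfrak n\pi^K$ is an $H$-submodule of the simple module $\pi^K$, forcing $\mathfrak n\pi^K=0$ and hence $\mathfrak n=\mathfrak m$. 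Thus $Z/\mathfrak m$ is a field, and $\pi^K$, being a finitely generated $Z$-module annihilated by $\mathfrak m$, is finite-dimensional over $Z/\mathfrak m$. Now the Jacobson hypothesis is used exactly once: since $R$ is Jacobson and $Z$ is a finitely generated $R$-algebra, the general Nullstellensatz shows that $\mathfrak m\cap R$ is maximal in $R$ and $Z/\mathfrak m$ is a finite field extension of $R/(\mathfrak m\cap R)$; therefore $\pi^K$ is finite-dimensional over $R/(\mathfrak m\cap R)$, in particular a finitely generated $R$-module, which completes the proof.

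The substantive input is Theorem~\ref{dhkmthm}, which is used as a black box; the rest is an exercise in commutative algebra. Inside the argument, the step that needs the most care is the maximality of $\mathrm{Ann}_Z(\pi^K)$: this is where simplicity of $\pi$ (through its Hecke-algebra translation) meets the module-finiteness of $H$ over $Z$ via Nakayama, and only afterwards does the Jacobson property enter, solely to transport finiteness from $Z/\mathfrak m$ down to $R$. The reduction to pro-$p$ level at the start is routine but essential, since the whole passage between smooth $RG$-modules and $H_R(G,K)$-modules relies on $\vol(K)$ being invertible in $R$.
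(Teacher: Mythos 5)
Your proof is correct, and its overall architecture coincides with the paper's: reduce to a pro-$p$ level $K$ using noetherianity of $R$, observe that $\pi^K$ is a simple $H_R(G,K)$-module (the paper cites Vign\'eras I.6.3 where you reprove the standard argument by hand), and then deduce finite generation over $R$ from Theorem~\ref{dhkmthm} together with the Jacobson hypothesis. The one place where you genuinely diverge is the key finiteness statement for simple Hecke modules, which the paper isolates as a Corollary to Theorem~\ref{dhkmthm}. There, writing $M\cong H/\mathfrak m$ for a maximal left ideal $\mathfrak m$ and setting $\mathfrak m_Z=\mathfrak m\cap Z$, the paper shows $Z/\mathfrak m_Z$ is a field by an integrality trick: a nonzero $\bar z$ has a left inverse $\bar h$ in $H/\mathfrak m$ by simplicity, $\bar h$ is integral over $Z/\mathfrak m_Z$ because $H$ is module-finite over $Z$, and multiplying the integral equation by $\bar z^{\,n-1}$ forces $\bar h$ to lie in $Z/\mathfrak m_Z$. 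You instead show that $\mathrm{Ann}_Z(\pi^K)$ is a maximal ideal of $Z$ via support and Nakayama, using that $\pi^K$ is cyclic over $H$ and hence finitely generated over $Z$. The two arguments establish the same intermediate fact --- the quotient of $Z$ acting faithfully on the simple module is a field (note $\mathrm{Ann}_Z(M)=\mathfrak m\cap Z$ precisely because $Z$ is central) --- and both then invoke the general Nullstellensatz for the finitely generated $R$-algebra $Z$ to descend finiteness to $R$. Your Nakayama route is the more standard commutative-algebra reflex and avoids manipulating the left inverse, at the cost of introducing localization; the paper's integrality computation is elementary and self-contained. Either way the substance --- Theorem~\ref{dhkmthm} used as a black box, and the Jacobson property entering exactly once at the end --- is identical.
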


Recall that a ring is called Jacobson if every prime ideal is the intersection of the
maximal ideals which contain it. The examples of noetherian Jacobson $\mathbb{Z}[1/p]$-algebras include all fields of characteristic not equal to $p$ as well as finitely generated algebras over such fields such as $\mathbb{F}_{l}[T_{1},\ldots,T_{n}]$ or finite rings such as $\mathbb{Z}/l^{m}\mathbb{Z}$ with $l\neq p$ for which Theorem \ref{mainthm} is a new result. When $R=\mathbb{C}$, Theorem \ref{mainthm} is a classical result in the representation theory of $p$-adic groups. The case when $R$ is any field of characteristic not equal to $p$ is given in \cite[Proposition 4.10]{vh}. We remark that Theorem \ref{mainthm} does not hold for representations over a field of characteristic $p$, see \cite{gls23}.


Theorem \ref{mainthm} follows from the following Corollary to Theorem \ref{dhkmthm}:

\begin{corollary to thm}\label{cor}
\emph{Let $R$ be a noetherian Jacobson $\mathbb{Z}[1/p]$-algebra, and $M$ be a simple (left) module over $H_{R}(G,K)$. Then $M$ is a finitely generated $R$-module.}
\end{corollary to thm}
\begin{proof}
To ease the notation, let us write $H=H_{R}(G,K)$ and $Z=Z_{R}(G,K)$. Choose a surjective map $H\twoheadrightarrow M$ of $R$-modules. Let $\mathfrak{m}$ be the kernel of the surjection $H\twoheadrightarrow M$ and $\mathfrak{m}_{Z}:=\mathfrak{m}\cap Z$. Note that $\mathfrak{m}_{Z}$ is a two-sided ideal of $Z$ because $Z$ is the center of $H$. We claim that $\frac{Z}{\mathfrak{m}_{Z}}$ is a field. Let $\bar{z}:=z+\mathfrak{m}_{Z}\in \frac{Z}{\mathfrak{m}_{Z}}$ be a non-zero element. Then $\bar{z}$ is also non-zero in $\frac{H}{\mathfrak{m}}$. So the left $H$-submodule $H\bar{z}$ of $\frac{H}{\mathfrak{m}}$ generated by $\bar{z}$ is equal to $\frac{H}{\mathfrak{m}}$ because $\frac{H}{\mathfrak{m}}$ is simple. Therefore, there exists $h\in H$ such that $\bar{h}\bar{z}=\bar{1}$ in $\frac{H}{\mathfrak{m}}$.

Consider the $\frac{Z}{\mathfrak{m}_{Z}}$-algebra $\frac{Z}{\mathfrak{m}_{Z}}[\bar{h}]$ generated by $\bar{h}$. It is commutative because $\frac{Z}{\mathfrak{m}_{Z}}$ is commutative. Moreover, as $\frac{H}{\mathfrak{m}}$ is a finitely generated $\frac{Z}{\mathfrak{m}_{Z}}$-module and $\frac{Z}{\mathfrak{m}_{Z}}$ is noetherian, $\frac{Z}{\mathfrak{m}_{Z}}[\bar{h}]$ is a finitely generated $\frac{Z}{\mathfrak{m}_{Z}}$-module. Hence, $\bar{h}$ is integral over $\frac{Z}{\mathfrak{m}_{Z}}$, i.e. \[\bar{h}^{n}+\bar{a}_{n-1}\bar{h}^{n-1}+\ldots+\bar{a}_{0}=0,\] for some $n\in\mathbb{N}$ and $\bar{a}_{n-1},\bar{a}_{n-2},\ldots,\bar{a}_{0}\in \frac{Z}{\mathfrak{m}_{Z}}$. Multiplying both sides of the above by $\bar{z}^{n-1}$ and using that $\frac{Z}{\mathfrak{m}_{Z}}$ commutes with $\bar{h}$, we obtain that \[\bar{h}+\bar{a}_{n-1}+\bar{a}_{n-2}\bar{z}+\ldots+\bar{a}_{0}\bar{z}^{n-1}=0.\] Hence $\bar{h}=-\left(\bar{a}_{n-1}+\bar{a}_{n-2}\bar{z}+\ldots+\bar{a}_{0}\bar{z}^{n-1}\right)\in\frac{Z}{\mathfrak{m}_{Z}}$.   

Now the field $\frac{Z}{\mathfrak{m}_{Z}}$ is a finitely generated $R$-algebra. One of the characterizations of Jacobson rings implies that $\frac{Z}{\mathfrak{m}_{Z}}$ is a finitely generated $R$-module \cite[Theorem 10]{eme}. Since $\frac{H}{\mathfrak{m}}$ is finite over $\frac{Z}{\mathfrak{m}_{Z}}$, we get that $\frac{H}{\mathfrak{m}}\cong M$ is also a finitely generated $R$-module.
\end{proof}

\begin{proof}[Proof of Theorem \ref{mainthm}]
Since $G$ has a fundamental system of neighborhoods of identity consisting of open pro-$p$ subgroups, it is enough to show that $\pi^{K}$ is a finitely generated $R$-module for $K\subseteq G$ an open pro-$p$ subgroup. Let $K\subseteq G$ be an open pro-$p$ subgroup such that $\pi^{K}\neq 0$. Since $\pi$ is simple and $p\in R^{\times}$, $\pi^{K}$ is a simple $H_{R}(G,K)$-module by \cite[I.6.3]{vig96}. Hence, $\pi^{K}$ is a finitely generated $R$-module by the Corollary to Theorem \ref{dhkmthm}. 
\end{proof}

\begin{remark}\label{converse of hecke module statement}
	The requirement for $R$ to be Jacobson in Corollary to Theorem \ref{dhkmthm} is necessary. Indeed, if $R$ is a commutative ring and if all simple modules over $H_{R}(G,K)$ are finitely generated $R$-modules for all $p$-adic reductive groups $G$ and compact open subgroups $K$, then $R$ is Jacobson. The following proof of this converse statement was communicated to us by M.-F. Vign\'{e}ras: By Satake \cite[\S 8]{sat63}, if $G$ is a classical simple group with trivial center and $K\subseteq G$ a natural maximal compact subgroup, then $H_{R}(G,K)$ is a polynomial ring over $R$ in $m$ variables where $m$ is the rank of a maximal split torus in $G$. Thus, a finitely generated $R$-algebra $A$ is a quotient of some $H_{R}(G,K)$. If $A$ is a field, then $A$ is a simple module over $H_{R}(G,K)$, and hence a finitely generated $R$-module by assumption. This means that $R$ is Jacobson.      
\end{remark}	

\begin{remark}
Let $R=\mathbb{Z}_{l}$ with $l\neq p$ and $G=\mathrm{GL}_{2}(\mathbb{Q}_{p})$. As $R$ is not Jacobson, Remark \ref{converse of hecke module statement} suggests that $G$ admits a smooth irreducible $\mathbb{Z}_{l}$-representation that is not admissible. Indeed, let $K=\mathrm{GL}_{2}(\mathbb{Z}_{p})$. By \cite[Proposition 2.1]{gk14}, $H=H_{R}(G,K)\cong R[T_{0},T_{0}^{-1},T_{1}]$. One can make $M:=\mathbb{Q}_{l}$ into a simple $H$-module by defining the action via the surjective map $H\twoheadrightarrow\mathbb{Q}_{l}$ which takes $T_{0}$ to $1$ and $T_{1}$ to $l^{-1}$. However, note that $M$ is not a finitely generated $R$-module. By choosing a prime $l$ so that the pro-order of $K$ is invertible in $R$, there exists a smooth simple $RG$-module $\pi$ such that $\pi^{K}\cong M$ as $H$-modules \cite[I.4.4 and I.6.3]{vig96}. Since $\pi^{K}$ is not a finite $R$-module, $\pi$ is non-admissible.    
\end{remark}

\noindent {\it Acknowledgments}: The author thanks Radhika Ganapathy and M.-F. Vign\'{e}ras for many helpful discussions.

\end{document}